\theoremstyle{changebreak}
\newcommand{\R}{\ensuremath{\mathbb{R}}} 
\newcommand{\E}{\ensuremath{\mathbb{E}}}
\newcommand{\Z}{\ensuremath{\mathbb{Z}}}
\newcommand{\F}{\ensuremath{\mathbb{F}}}
\newcommand{\s}{\ensuremath{\mathcal{S}}}
\newcommand{\OO}{\ensuremath{\mathcal{O}}}
\newcommand{\disj}{\ensuremath{\stackrel{\cdot}{\bigcup}}}
\newtheorem{theorem}{Theorem}[section]
\newtheorem{defin}[theorem]{Definition}
\newtheorem{rem}[theorem]{Remark}
\newtheorem{lemma}[theorem]{Lemma}
\newtheorem{cor}[theorem]{Corollary}
\theoremstyle{nonumberplain}
\newtheorem{proof}{Proof}
\begin{document}
\title{Spherical designs and lattices}
\author{Elisabeth Nossek}
\maketitle
\begin{abstract}
In this article we prove that integral lattices with minimum $\le 7$
(or $\le 9$) whose set of minimal vectors form spherical $9$-designs
(or $11$-designs respectively) are extremal, even and unimodular.
We furthermore show that there does not exist an integral lattice
with minimum $\le 11$ which yields a $13$-design.
\end{abstract}
\section{Introduction}

The density of a sphere packing associated to a lattice $\Lambda$
is given through the Hermite function $\gamma(\Lambda)$.
The local maxima of $\gamma$ are called extreme lattices and 
were characterised through the geometry
of their shortest vectors,
$S(\Lambda):=\{l\in \Lambda|(l,l)=\min(\Lambda)\}$, where 
$\min(\Lambda):=\min\{(x,x)|0\not=x\in \Lambda\}$, in
the works of Voronoi(\cite{Voronoi}), Korkine and Zolotareff(\cite{KoZ}).
A prominent subclass of extreme lattices are the strongly perfect
lattices introduced by Venkov \cite{venkov}. They are
characterised by the property that $S(\Lambda)$ forms a spherical
$5$-design:

\begin{defin}
A finite subset $X$ of the $n$-dimensional sphere $\s^{n-1}(m)$ of radius $m$ 
forms a spherical t-design if 
\[\int_{\s^{n-1}(m)}f(x) dx = \frac{1}{|X|}\sum_{x\in X}f(x)\]
for all homogeneous polynomials $f$ in $n$ Variables and of degree $\le t$.
A lattice $\Lambda$ such that $S(\Lambda)$ is a spherical $t$-design
is called a $t$-design lattice.
\end{defin}

The classification of strongly perfect lattices is 
known up to dimension $12$ (\cite{dim10}, \cite{dim12}),
but becomes very complicated in higher dimensions (see \cite{mythesis}).
Venkov \cite{venkov} and Martinet \cite{martinet} imposed further 
design conditions and classified all integral lattices of $\min\le 3$
(resp. $\min\le 5$) whose minimal vectors form spherical $5$-designs
(resp. $7$-designs).

This paper extends their work, more precisely we prove the following 
theorem:
\begin{theorem}\label{main}
\begin{enumerate}
\item The only integral $9$-design lattices with minimum $\le 7$ are
	the Leech lattice $\Lambda_{24}$ and 
	the extremal even unimodular lattices in dimension $48$.\label{a}
\item The only integral $11$-design lattices with minimum $\le 9$ are 
	$\Lambda_{24}$ and the $48$ and $72$ dimensional extremal even
	unimodular lattices.\label{b}
\item There is no integral $13$-design lattice with minimum $\le 11$.\label{c}
\end{enumerate}
\end{theorem}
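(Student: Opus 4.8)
The plan is to combine Venkov's dictionary between lattices and spherical designs with the arithmetic rigidity coming from integrality, and to finish with modular forms. Write $n=\dim\Lambda$, $m=\min(\Lambda)$, $S=S(\Lambda)$, and encode the three cases uniformly as: $S$ is a $(2k+3)$-design with $m\le 2k+1$, so that $k=3,4,5$ correspond to parts (a), (b), (c). Since $2k+3\ge 5$, every lattice in question is strongly perfect in the sense of the introduction, so one may use the full strength of the design conditions. The basic computational device is the moment identity attached to a $t$-design: for every $v$ and every $j$ with $2j\le t$,
\[
\sum_{x\in S}(x,v)^{2j}=\frac{(2j-1)!!\,|S|\,m^{\,j}}{n(n+2)\cdots(n+2j-2)}\,(v,v)^{j},
\]
obtained by integrating $(x,v)^{2j}$ over the sphere and using the design property.

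The decisive extra ingredient is integrality. Taking $v$ minimal in $\Lambda$ (or in $\Lambda^{*}$) forces every $(x,v)\in\Z$, while Cauchy--Schwarz gives $|(x,v)|\le\sqrt{m\,\min(\Lambda^{*})}\le m$, since $\Lambda\subseteq\Lambda^{*}$ gives $\min(\Lambda^{*})\le m$. Writing $a_i=|\{x\in S:(x,v)=i\}|$, the identities for $j=1,\dots,k+1$ become a finite Diophantine system in the nonnegative integers $a_i$ and in $n$, the index $i$ ranging over the few integers with $|i|\le m\le 2k+1$. I would then exploit positivity of the $a_i$, the fact that the extreme value $|i|=m$ is taken only by $\pm v$, and the requirement that the system hold simultaneously for every minimal $v$, to force $\min(\Lambda^{*})$ up to its maximal value $m$ and the level of $\Lambda$ down to $1$; that is, $\Lambda$ is even unimodular, and the resulting vanishing of the leading coefficients of its harmonic theta series makes it extremal. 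Since an extremal even unimodular lattice in dimension $n$ has minimum $2\lfloor n/24\rfloor+2$, the hypothesis $m\le 11$ forces $\lfloor n/24\rfloor\le 4$, leaving only finitely many dimensions.

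Modular forms then finish the identification. For even unimodular $\Lambda$, $\theta_{\Lambda}$ is a weight-$n/2$ modular form for $\SL_2(\Z)$, and extremality pins it down uniquely, hence fixes $|S|$ and the whole inner-product distribution. By the theorem on design strengths of extremal even unimodular lattices, the minimal vectors form a spherical $11$-design exactly when $24\mid n$, and only a $7$- or a $3$-design when $n\equiv 8$ or $16\pmod{24}$; so the hypothesis of being at least a $9$-design isolates $n\in\{24,48,72,96\}$ with $m\in\{4,6,8,10\}$. Among these, those with $m\le 7$ are $n=24$ (the Leech lattice) and $n=48$, and those with $m\le 9$ also include $n=72$; this gives parts (a) and (b). For part (c) the same four dimensions survive, but each extremal lattice is \emph{exactly} an $11$-design: the space of cusp forms of weight $n/2+12$ for $\SL_2(\Z)$ is nonzero and the extremal theta series leaves a nonvanishing degree-$12$ harmonic sum, so none of these lattices is a $12$-design, let alone a $13$-design. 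Hence no integral $13$-design lattice with $m\le 11$ exists, and this conclusion does not even require knowing whether an extremal even unimodular lattice exists in dimension $96$.

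The hard part will be the structural step of the second paragraph: turning the finitely many moment identities --- together with integrality, positivity of the $a_i$, and their simultaneous validity over all minimal $v$ --- into the conclusion that the dual minimum is maximal and the level is $1$, i.e. that $\Lambda$ must be extremal even unimodular. This is exactly where the smallness of the minimum does its work, and it is considerably more delicate than the final modular-form bookkeeping, because as $k$ grows one gains more moment equations but also a wider range of admissible inner products, so the case analysis must be organised to stay finite. The concluding sharpness statement for part (c) --- that the degree-$12$ harmonic sum genuinely fails to vanish in each of the dimensions $24,48,72,96$ --- is then a finite and explicit modular-form computation.
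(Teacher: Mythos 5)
Your framework is the right one (Venkov's moment identities plus integrality), and your modular-form endgame is broadly consistent with the paper's, but the central step is missing, and you have yourself flagged it as ``the hard part'' without supplying it. Two concrete problems. First, your bound on the inner products is too weak to make the Diophantine system work: Cauchy--Schwarz only gives $|(x,v)|\le m$, so for $m=7$ you have unknowns $a_1,\dots,a_6$ against only four moment equations ($j=1,\dots,4$), an underdetermined system that positivity and integrality alone will not resolve. The paper instead uses the elementary parallelogram bound: for minimal $x\ne\pm\alpha$ in an integral lattice, $(x\mp\alpha,x\mp\alpha)\ge m$ forces $|(x,\alpha)|\le m/2$, so for $m\le 7$ only $s_1,s_2,s_3$ survive and the four equations overdetermine them; a finite computation then leaves only a short list of admissible pairs $(n,s)$ (and kills $m=7$ outright). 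The analogous bound for dual vectors minimal in their class modulo $\Lambda$ (Martinet's Lemme 1.1, $|(v,\lambda)|\le m/2$) plays the same role in the dual system. Without these bounds your ``finite Diophantine system'' is not effectively finite in the way you need.

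Second, your paragraph claiming to ``force $\min(\Lambda^*)$ up to $m$ and the level down to $1$'' is a statement of the desired conclusion, not an argument. The paper's actual mechanism is: evaluate the moment identities at a dual vector $v$ minimal in its class, obtaining a second linear system whose consistency forces $t=(v,v)$ to be a positive rational root of an explicit degree-$4$ (resp.\ degree-$5$) polynomial depending on $(n,s)$; only $n\in\{26,36\}$ (resp.\ $n=56$) survive, and these are then eliminated by ad hoc arguments --- parity of $\Lambda^*$ forcing unimodularity, the structure of $\Lambda^*/\Lambda$ as a quadratic $\F_3$-space giving $\det=3$ and a violation of the Hermite constant $\gamma_{26}$. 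None of this is ``vanishing of leading coefficients of harmonic theta series''; it is elementary lattice arithmetic, and it is where the theorem is actually proved. Finally, for part (c) your route would require carrying the structural step through $m\in\{10,11\}$ and then ruling out a hypothetical extremal lattice in dimension $96$; the paper avoids both by checking directly that the primary moment system already has no nonnegative integral solutions for $m\in\{10,11\}$, and by reducing $m<10$ to part (b) plus Martinet's result that the surviving lattices admit no $12$-design.
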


\section{Some facts about spherical designs and lattices}
As $9$ and $11$-designs are also $7$-designs,
we will summarize their classification known from \cite{martinet}:
\begin{theorem}\label{min_le_5}
The integral $7$-design lattices with minimum $\le 5$ are $\E_8$, 
the unimodular lattice $\OO_{23}$ with minimum $3$, 
the three laminated lattices $\Lambda_{16}$ (the Barnes-Wall
lattice), $\Lambda_{23}$ and $\Lambda_{24}$ (the Leech lattice)
and the unimodular lattices of dimension $32$ and minimum $4$.
\end{theorem}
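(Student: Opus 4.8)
The plan is to run the Venkov--Martinet design machinery: convert the $7$-design hypothesis into moment identities for the inner-product distribution of $S(\Lambda)$, use integrality to make that distribution finitely supported, and then solve the resulting Diophantine constraints case by case in the minimum $m:=\min(\Lambda)\in\{1,\dots,5\}$.

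First I would record the elementary consequence of the definition. Since $S(\Lambda)\subset\s^{n-1}(\sqrt m)$ is a $7$-design, the defining averaging property applies to the homogeneous polynomial $(v,\cdot)^{2k}$ for every $v\in S(\Lambda)$ and every $k$ with $2k\le 6$, giving
\[
\sum_{x\in S(\Lambda)}(x,v)^{2k}=|S(\Lambda)|\,m^{2k}\,\frac{(2k-1)!!}{n(n+2)\cdots(n+2k-2)},
\]
while all odd moments vanish by the symmetry $x\mapsto-x$; here I use $\int_{\s^{n-1}}(\hat v,\omega)^{2k}\,d\omega=(2k-1)!!/(n(n+2)\cdots(n+2k-2))$ for a unit vector $\hat v$. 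Writing $N_j:=\#\{x\in S(\Lambda):(x,v)=j\}$ and invoking integrality of $\Lambda$, Cauchy--Schwarz forces $(x,v)\in\Z$ with $|(x,v)|\le m$ and equality only for $x=\pm v$; hence $N_j=N_{-j}$, $N_m=1$, and the distribution is supported on $\{0,\pm1,\dots,\pm m\}$. The four identities (degrees $0,2,4,6$) then become linear relations among the finitely many unknowns $N_0,\dots,N_{m-1}$, the dimension $n$ and $N:=|S(\Lambda)|/2$.

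Next I would treat the cases in increasing $m$. For $m=1$ the three nontrivial moment equations are incompatible unless $n=1$, so nothing survives. For $m=2$ the only unknowns are $N_0,N_1$, and the over-determined system collapses to $n^2-9n+8=0$, forcing $n=8$, $N=120$, $N_1=56$; the $240$ minimal vectors then constitute a root system, necessarily that of $\E_8$, so $\Lambda=\E_8$. For $m=3$ one similarly reduces the admissible $(n,N,\{N_j\})$ to a short list, the only realizable member being the unimodular lattice $\OO_{23}$. The cases $m=4,5$ are heavier: the unknowns $N_0,N_1,N_2,N_3$ (and $N_4$ for $m=5$) are no longer pinned down by the four single-vector moments alone, so I would bring in further relations — mixed moments $\sum_x(x,u)^a(x,v)^b$ summed over pairs $u,v\in S(\Lambda)$, parity and determinant constraints on the Gram data forcing $\Lambda$ even or unimodular, and the theta-series/mass constraints of the relevant genus. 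This should confine $m=4$ to dimensions $16,23,24,32$ with the listed $\Lambda_{16},\Lambda_{23},\Lambda_{24}$ and the $32$-dimensional unimodular lattices of minimum $4$, and should rule out $m=5$ entirely.

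I expect the main obstacle to lie precisely in $m=4,5$. The clean rigidity of $m\le 3$ stems from the single-vector moments essentially determining $(n,N)$; once $m\ge 4$ the admissible parameter sets proliferate and several are not obviously non-realizable, so eliminating the spurious ones and proving uniqueness of the genuine lattices — in particular identifying the Leech lattice $\Lambda_{24}$ and controlling dimension $32$, where the classification of unimodular lattices is no longer explicit — requires genuine lattice-theoretic input beyond the design combinatorics. Passing from a consistent moment distribution to an actual integral lattice realizing it is the crux of the argument.
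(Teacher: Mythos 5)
The paper never proves Theorem \ref{min_le_5}: it is quoted from Martinet, so the only internal point of comparison is the analogous machinery the paper runs for $9$-, $11$- and $13$-designs in Sections 3--5. Your framework (moment identities from the design property, integrality, case analysis in $m$) is indeed that machinery, but one quantitative ingredient is wrong and the case that carries the whole theorem is left as a wish list. The key error is the support bound. Cauchy--Schwarz gives $|(x,v)|\le m$, but for $x\ne\pm v$ minimality of $x\mp v$ gives $(x\mp v,x\mp v)\ge m$, hence $|(x,v)|\le m/2$; for an integral lattice with $m\le 5$ this kills $N_3$ and $N_4$ outright. So for every $m\le 5$ the three nontrivial moment identities (degrees $2,4,6$) involve only $N_1,N_2$ (with $N_0$ eliminated by the count), the system is overdetermined, and one always extracts a Diophantine relation between $n$ and $N$ --- exactly as in the paper's Lemma \ref{9-des}. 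Your diagnosis that for $m=4,5$ ``the unknowns $N_0,\dots,N_4$ are no longer pinned down'' and the ensuing appeal to unspecified mixed moments and mass formulas is an artifact of the missing factor $\tfrac12$; the cases $m=4,5$ are no less rigid than $m\le 3$.

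Second, even with the corrected bound, the passage from admissible parameters $(n,N)$ to the actual list of lattices is only gestured at (``this should confine\dots''). The mechanism used both by Martinet and throughout this paper is the unimodular/non-unimodular dichotomy: for non-unimodular $\Lambda$ one applies the same moment identities to $v\in\Lambda^*\setminus\Lambda$ minimal in its class modulo $\Lambda$ (for which $|(v,\lambda)|\le m/2$ by Martinet's Lemme 1.1) and obtains a rational-root condition on $t=(v,v)$ that eliminates almost all parameter sets, as in Lemmas \ref{26_36} and \ref{56}; for unimodular $\Lambda$ one passes to the even sublattice, invokes $8\mid n$ and the extremality bound $\min\le 2\lfloor n/24\rfloor+2$, and then the known classification of unimodular lattices in dimensions $16$, $23$, $24$ and $32$ to identify $\Lambda_{16}$, $\OO_{23}$, $\Lambda_{23}$, $\Lambda_{24}$ and the minimum-$4$ lattices in dimension $32$. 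None of this is in your write-up, so as it stands you have at best a proof for $m\le 3$, and even there the identification of $\OO_{23}$ silently uses the classification of $23$-dimensional unimodular lattices of minimum $3$.
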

Martinet also proves that only the Leech lattice 
is an $11$-design lattice and the other lattices in Theorem \ref{min_le_5} do not 
yield $8$-designs \cite[Proposition]{martinet}.
Hence the only integral lattice with minimum $\le 5$ whose
minimal vectors form a $9$ or $11$-design is the Leech lattice.

In this article we will use the following characterisation
(see \cite[th. 3.2]{venkov}):
\begin{theorem}\label{char}
A finite set $X=-X\subset \s^{n-1}(m)$ forms a spherical 
$2t+1$-design if and only if 
\begin{align*}
D_{2i}(\alpha)&:=\sum_{x\in X} (x,\alpha)^{2i}=c_{i}|X|m^i(\alpha,\alpha)^i\\
\text{with } c_i&:=\prod_{k=0}^{i-1}\frac{1+2k}{n-2k}
\end{align*}
holds for all $i\le t$ and all $\alpha\in\R^n$.
\end{theorem}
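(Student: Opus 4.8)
The \emph{plan} is to test the defining condition only against powers of linear forms and then to separate the two degree parities. By the Definition, $X\subset\s^{n-1}(m)$ is a spherical $(2t+1)$-design exactly when $\frac1{|X|}\sum_{x\in X}f(x)=\int_{\s^{n-1}(m)}f\,dx$ holds for every homogeneous polynomial $f$ of each degree $d\le 2t+1$, where here $\int_{\s^{n-1}(m)}\cdot\,dx$ denotes the normalised surface average. The key reduction is the classical polarisation fact that, for fixed $d$, the powers $\{(\cdot,\alpha)^d:\alpha\in\R^n\}$ span the whole space of homogeneous polynomials of degree $d$: expanding $\sum_{\epsilon\in\{\pm1\}^d}\epsilon_1\cdots\epsilon_d\,(x,\epsilon_1\alpha_1+\cdots+\epsilon_d\alpha_d)^d$ produces, up to the factor $2^d d!$, the product $(x,\alpha_1)\cdots(x,\alpha_d)$, and such products already span the degree-$d$ forms. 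Since $f\mapsto\frac1{|X|}\sum_{x\in X}f(x)$ and $f\mapsto\int_{\s^{n-1}(m)}f\,dx$ are both linear, it suffices to verify the design equality on the family $f(x)=(x,\alpha)^d$, degree by degree.

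First I would dispose of the odd degrees using the hypothesis $X=-X$. For odd $d$ the polynomial $(x,\alpha)^d$ is odd, so the points of $X$ cancel in $\pm$ pairs and $\sum_{x\in X}(x,\alpha)^d=0$; likewise the antipodally invariant surface measure gives $\int_{\s^{n-1}(m)}(x,\alpha)^d\,dx=0$. Hence every odd-degree condition up to $d=2t+1$ holds automatically, and the $d=0$ case is the trivial identity. Therefore $X$ being a $(2t+1)$-design is equivalent to the even-degree equalities $d=2i$ with $1\le i\le t$ alone, namely
\[\frac1{|X|}\sum_{x\in X}(x,\alpha)^{2i}=\int_{\s^{n-1}(m)}(x,\alpha)^{2i}\,dx\qquad(1\le i\le t,\ \alpha\in\R^n).\]

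It then remains only to evaluate the right-hand moment. By rotational invariance I may take $\alpha$ to be a positive multiple of a fixed coordinate vector, and rescaling $x=\sqrt{m}\,u$ onto the unit sphere turns the integral into $m^i(\alpha,\alpha)^i$ times the one-dimensional spherical moment of $u_1^{2i}$. The standard Beta-function evaluation of this moment yields precisely the constant $c_i$ of the statement, so that $\int_{\s^{n-1}(m)}(x,\alpha)^{2i}\,dx=c_i\,m^i(\alpha,\alpha)^i$. Substituting this into the displayed conditions reproduces exactly $D_{2i}(\alpha)=c_i|X|m^i(\alpha,\alpha)^i$ for all $i\le t$ and all $\alpha$, and both implications are now immediate: the forward direction is the case $f=(\cdot,\alpha)^{2i}$ of the design property, and the converse follows from the spanning statement together with the parity cancellation.

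The only non-formal ingredient is the moment computation and the ensuing bookkeeping of $c_i$; the polarisation (spanning) lemma and the parity argument are routine. Accordingly, I expect the careful tracking of the normalising constant — in particular its dependence on $n$ and $i$ through the Beta-integral — to be the sole delicate point of the argument.
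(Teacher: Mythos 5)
The paper does not prove this theorem at all: it is imported from Venkov \cite[th.~3.2]{venkov} without argument. So your proposal can only be judged on its own terms. Its structure is the standard (and correct) one: polarisation shows that for each degree $d$ the powers $(\cdot,\alpha)^d$ span the homogeneous forms of degree $d$, so by linearity of both sides the design condition need only be tested on these; the odd degrees $d\le 2t+1$ hold automatically because $X=-X$ and the surface measure is antipodally invariant; and the even degrees $d=2i$, $1\le i\le t$, reduce to evaluating the spherical moment of a linear form. The scaling step $x=\sqrt{m}\,u$ is also the right reading of the paper's convention that $\s^{n-1}(m)$ is the sphere $\{x:(x,x)=m\}$.

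The one concrete defect is in the last step, where you assert that the Beta-function evaluation "yields precisely the constant $c_i$ of the statement." It does not. The standard computation gives $\int_{S^{n-1}}u_1^{2i}\,d\sigma(u)=\frac{\Gamma(i+\frac12)\Gamma(\frac n2)}{\Gamma(\frac12)\Gamma(\frac n2+i)}=\prod_{k=0}^{i-1}\frac{1+2k}{n+2k}$, i.e. denominators $n,\,n+2,\dots,n+2i-2$, whereas the statement's $c_i$ has $n-2k$. The statement as printed carries a sign typo: the paper's own later use of the theorem (the right-hand sides $\frac{sm^2}{n}$, $\frac{3sm^4}{n(n+2)}$, $\frac{15sm^6}{n(n+2)(n+4)}$, \dots in Lemma~\ref{9-des} and its sequels) confirms that the intended constant is $\prod_{k=0}^{i-1}\frac{1+2k}{n+2k}$. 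Since you flagged the "careful tracking of the normalising constant" as the sole delicate point and then skipped it, you should carry out the Beta integral explicitly, record the correct constant, and note the discrepancy with the printed formula; with that done the proof is complete.
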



In the following we will often distinguish between unimodular 
and non-unimodular lattices. If $\Lambda$ is an integral non-unimodular
lattice then
for $v\in\Lambda^*$ minimal in its class modulo
$\Lambda$ holds that $|(v,\lambda)|\le \frac{\min(\Lambda)}{2}$
for all $\lambda \in S(\Lambda)$ (\cite[Lemme 1.1]{martinet}).
For even  non-unimodular lattices $\Lambda$ 
we know that $\Lambda^*/\Lambda$ is a regular quadratic group in 
particular there exists an element $w\in \Lambda^*$ with 
$(w,w)\not\in 2\Z$ and we can assume w.l.o.g. that such a $w$ is minimal in
its class. 


\section{$9$-design lattices of minimum $\le 7$}
Throughout this section $\Lambda\subseteq \R^n$ denotes an 
integral $9$-design lattice of minimum $m\le 7$ with $X\disj -X:=S(\Lambda)$
and $s:=|X|$.\\
We will start by proving part \ref{a} of Theorem \ref{main}.
The characterisation in Theorem \ref{char} leads to the 
following system of linear 
equations for which only integral solutions correspond to integral $9$-design 
lattices.

\begin{lemma}\label{9-des}
For all $\alpha\in S(\Lambda)$ put 
$s_i(\alpha):=|\{x\in X|(x,\alpha)=\pm i\}|$.
The $s_i$ are independent of $\alpha$ and $s_i=0$ for $i>3$.
The following system of linear equations has non-negative integral solutions for 
the $s_i$ and for $s$ if $S(\Lambda)$ is a spherical $9$-design:
\begin{align*}
\begin{pmatrix} 1&2^2&3^2\\1&2^4&3^4\\1&2^6&3^6\\1&2^8&3^8
\end{pmatrix}
\begin{pmatrix}s_1\\ s_2\\ s_3\\ \end{pmatrix}=
\begin{pmatrix}\frac{sm^2}{n} -m^2\\ \frac{3sm^4}{n(n+2)}-m^4\\
	\frac{15sm^6}{n(n+2)(n+4)}-m^6\\ \frac{105sm^8}{n(n+2)(n+4)(n+6)}-m^8
\end{pmatrix}.
\end{align*}
\end{lemma}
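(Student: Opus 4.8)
The plan is to read the system directly off the design characterisation of Theorem~\ref{char}, applied to the antipodal set $S(\Lambda)=X\disj -X$. Since $S(\Lambda)=-S(\Lambda)$ lies on the sphere $\s^{n-1}(m)$ and is a spherical $9$-design, i.e. a $2t+1$-design with $t=4$, Theorem~\ref{char} applies for $i=1,2,3,4$. I would fix $\alpha\in S(\Lambda)$, so that $(\alpha,\alpha)=m$, and use the antipodal symmetry $(-x,\alpha)^{2i}=(x,\alpha)^{2i}$ to pass from the sum over $S(\Lambda)$ to twice the sum over $X$. Using $|S(\Lambda)|=2s$ this gives, for each $i\le 4$,
\[\sum_{x\in X}(x,\alpha)^{2i}=c_i\, s\, m^{2i},\]
with $c_i$ as in Theorem~\ref{char}.

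The key geometric input is to bound the inner products. As $\Lambda$ is integral, $(x,\alpha)\in\Z$ for every $x\in X$. For $x\ne\pm\alpha$ the vectors $x\pm\alpha$ are nonzero elements of $\Lambda$, hence $(x\pm\alpha,x\pm\alpha)\ge m$; expanding and using $(x,x)=(\alpha,\alpha)=m$ yields $2m\pm2(x,\alpha)\ge m$, that is $|(x,\alpha)|\le m/2\le 7/2$. Since the value is an integer this forces $|(x,\alpha)|\le 3$, so $s_i(\alpha)=0$ for $3<i<m$. Exactly one $x\in X$ satisfies $x=\pm\alpha$ (because $X\disj -X$ is a disjoint union), and it contributes $(x,\alpha)^{2i}=m^{2i}$, while the vectors with $(x,\alpha)=0$ contribute nothing. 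This is where the hypothesis $m\le 7$ enters essentially, and it is the only genuinely geometric step; I expect it to be the main point, everything else being linear algebra.

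Grouping the moment sum according to $j:=|(x,\alpha)|\in\{1,2,3\}$, peeling off the single diagonal term $m^{2i}$ and moving it to the right-hand side, I obtain for $i=1,2,3,4$ the relations
\[\sum_{j=1}^{3} j^{2i}\,s_j(\alpha)=c_i\, s\, m^{2i}-m^{2i},\]
which upon substituting the explicit $c_i$ are exactly the four rows of the stated system. The quantities $s_1,s_2,s_3$ are non-negative integers because they are cardinalities, and $s=|X|$ is a positive integer, so this is the asserted non-negative integral solution.

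For the independence of $\alpha$ I would observe that the right-hand side depends only on $n$, $m$ and $s$ and not on $\alpha$, while the first three rows of the coefficient matrix have entries $j^{2i}$ for $i=1,2,3$; writing $u_j:=j^2\in\{1,4,9\}$ these rows are $u_j,u_j^2,u_j^3$, an invertible Vandermonde-type matrix since the $u_j$ are distinct and nonzero. Hence $(s_1,s_2,s_3)$ is uniquely determined by the right-hand side, whence it is the same for every $\alpha\in S(\Lambda)$. The fourth row then records the extra consistency constraint coming from $S(\Lambda)$ being a $9$- and not merely a $7$-design, which will be exploited later.
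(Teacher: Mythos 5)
Your proof is correct and takes essentially the same route as the paper, whose entire proof is the remark that the system results from evaluating the equations of Theorem~\ref{char} at $\alpha\in S(\Lambda)$; you simply supply the details the paper leaves implicit (the integrality bound $|(x,\alpha)|\le m/2\le 7/2$ forcing $s_i=0$ for $3<i<m$, the separate contribution $m^{2i}$ of the unique $x=\pm\alpha$ that produces the $-m^{2i}$ terms, and the Vandermonde argument for independence of $\alpha$). No gaps.
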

\begin{proof}
The system of equations is just a result of the evaluation of 
the equations in Theorem \ref{char} for $\alpha\in S(\Lambda)$.
\end{proof}

\begin{rem}
A simple calculation with Pari shows that 
for $m=7$ there are no non-negative integral solutions 
$(n,s,s_1,s_2,s_3)\in\Z_{>0}^5$.
For $m=6$ non-negative integral solutions exist only for the following values of
$n$ and $s$:
\begin{table}[h]
\begin{center}
\begin{tabular}{|c|c|c|c|c|c|c|}
\hline
n & 26    & 36      & 44      & 46       & 48       & 49\\
\hline
s & 69888 & 1149120 & 8500800 & 13395200 & 26208000 & 50992095\\
\hline
\end{tabular}
\end{center}
\caption{Dimensions and kissing numbers for integral 9-design lattices.}
\label{9designs}
\end{table}
\end{rem}

Following a method used in \cite{martinet} we will have a look at non-unimodular
lattices at first.
\begin{lemma}\label{26_36}
If $\Lambda$ is non-unimodular and $\min(\Lambda)=6$ then $n\in\{26,36\}$. 
\end{lemma}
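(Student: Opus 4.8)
The plan is to exploit non-unimodularity through the dual lattice, exactly along the lines of Martinet's Lemme 1.1 quoted above. Since $\Lambda$ is integral and not unimodular, $\Lambda^*$ strictly contains $\Lambda$, so there is a nonzero class in $\Lambda^*/\Lambda$; I would choose $v\in\Lambda^*$ minimal in such a class. Then $(v,x)\in\Z$ for every $x\in\Lambda$, while the quoted bound gives $|(v,x)|\le \min(\Lambda)/2=3$ for all $x\in S(\Lambda)$. Hence every inner product $(v,x)$ with $x\in S(\Lambda)$ lies in $\{0,\pm1,\pm2,\pm3\}$. Writing $t_j:=|\{x\in X\mid (v,x)=\pm j\}|$ for $0\le j\le 3$ and $y:=(v,v)>0$, the distribution of the $(v,x)$ is encoded in the non-negative integers $t_0,\dots,t_3$ together with $t_0+t_1+t_2+t_3=s$.

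Next I would feed $\alpha=v$ into the design identities. Because $S(\Lambda)$ is a $9$-design and $9=2\cdot4+1$, Theorem \ref{char} gives $\sum_{x\in S(\Lambda)}(v,x)^{2i}=2s\,c_i\,m^i y^i$ for every $i\le 4$. The key device is the annihilating polynomial
\[ Q(u):=u^2(u^2-1)(u^2-4)(u^2-9)=u^8-14u^6+49u^4-36u^2, \]
which vanishes at each of $0,\pm1,\pm2,\pm3$; consequently $\sum_{x\in S(\Lambda)}Q((v,x))=0$. Substituting the four moment values and dividing by $2smy$ turns this into a cubic in $y$ depending only on $n$ (with $m=6$):
\[ 1260\,y^3-420(n+6)\,y^2+49(n+4)(n+6)\,y-2(n+2)(n+4)(n+6)=0. \]
Thus the squared length $(v,v)$ of a minimal coset representative is forced to be one of the (at most three) real roots of this polynomial, a list determined purely by the dimension.

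With the admissible values of $y$ in hand I would recover the counts from the lower moment equations, e.g. $t_3=\tfrac{1}{360}\bigl(c_3 s m^3 y^3-5c_2 s m^2 y^2+4c_1 s m y\bigr)$ and the analogous expressions for $t_2,t_1$, and finally $t_0=s-t_1-t_2-t_3$. For a genuine lattice these five numbers must all be non-negative integers. The remaining step is purely arithmetic: for each dimension $n\in\{44,46,48,49\}$, using the corresponding kissing data $s$ from Table \ref{9designs}, I would solve the cubic and test the resulting $t_j$. The claim is that in every case at least one constraint (non-negativity of some $t_j$, integrality, or $t_0\ge0$) fails, so no minimal coset vector $v$ can exist; since non-unimodularity guarantees one, these dimensions are excluded, whereas for $n\in\{26,36\}$ the same computation is consistent.

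The main obstacle is this last, computational, step. The cubic need not have rational roots, so the elimination cannot simply read off a contradiction modulo a small integer; one must argue with the actual (possibly irrational) roots and show that none of them yields an admissible inner-product distribution. Care is needed to make the elimination exhaustive — that no root accidentally produces non-negative integral $t_j$ — and to ensure that it is the \emph{minimality} of $v$ in its class, not mere membership in $\Lambda^*$, that licenses the bound $|(v,x)|\le 3$ underpinning the whole argument.
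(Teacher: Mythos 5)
Your setup coincides with the paper's: for a coset-minimal $v\in\Lambda^*\setminus\Lambda$ the paper introduces the counts $t_i(v)$, writes the four moment identities of Theorem \ref{char} at $\alpha=v$ as an overdetermined $4\times3$ linear system in $t_1,t_2,t_3$ whose right-hand side depends on $t:=(v,v)$, and extracts from the compatibility condition a degree-$4$ polynomial $p_n(t)$. Your annihilating polynomial $Q(u)=u^2(u^2-1)(u^2-4)(u^2-9)$ performs exactly that elimination; your cubic is $p_n(t)/t$, and your observation that it depends only on $n$ (not on $s$) is correct --- I checked that $t=4$ is a root for $n=36$ and that $t\in\{8/3,4\}$ are roots for $n=26$, matching the paper.

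The gap is in the sieving criterion at the end. You propose to take the possibly irrational real roots $y$ of the cubic and test non-negativity and integrality of the resulting $t_j$, and you yourself flag the handling of irrational roots as the main unresolved obstacle: ruling out an irrational $y$ via ``$t_3$ is not an integer'' is not a finite congruence check and would need an argument about conjugates. The observation that closes this gap --- and the one the paper actually uses --- is that $v\in\Lambda^*\subset\Q\otimes\Lambda$ forces $t=(v,v)$ to be a positive \emph{rational} number. Hence only the positive rational roots of the integer-coefficient polynomial $p_n$ are admissible, and these are located by the rational root theorem; running this over the six pairs $(n,s)$ of Table \ref{9designs} leaves exactly $n=26$ and $n=36$, with no need to compute any $t_j$ at all. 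With that one line added, your argument is the paper's; without it, the ``purely arithmetic'' final step you describe is not actually carried out and is genuinely harder than you make it sound.
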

\begin{proof}
For all elements $v \in \Lambda^*\setminus\Lambda$ that are minimal 
in their class modulo $\Lambda$ we can define
$t_i(v):=|\{x\in S(\Lambda)|(x,v)=i\}|$. The $t_i$ are independent 
of $v$ and for $i>4$ $t_i=0$.
Therefore we get a system of equations again with $t:=(v,v)$:
\begin{align*}
\begin{pmatrix} 1&2^2&3^2\\1&2^4&3^4\\1&2^6&3^6\\1&2^8&3^8
\end{pmatrix}
\begin{pmatrix} t_1\\ t_2\\ t_3\\ \end{pmatrix}=
\begin{pmatrix} \frac{smt}{n}\\ \frac{3sm^2t^2}{n(n+2)}\\
	\frac{15sm^3t^3}{n(n+2)(n+4)}\\ \frac{105sm^4t^4}{n(n+2)(n+4)(n+6)}
\end{pmatrix}.
\end{align*}
$t$ has to be rational and positive. For every pair $(n,s)$ from 
Table \ref{9designs} we get a solution of the system and a polynomial equation 
$p_n$ of degree $4$ whose positive rational roots are the possible values for $t$.
But the only cases in which $p_n$ has such roots are $n=26$
where $t\in\{\frac{8}{3},4\}$ and for $n=36$ where $t=4$.
\end{proof}

\begin{lemma}\label{not26_36}
There is no non-unimodular lattice in dimension $26$ or $36$ such that
its set of minimal vectors form a spherical $9$-design.
\end{lemma}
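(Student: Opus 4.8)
The plan is to rule out the two surviving dimensions $n=26$ and $n=36$ (with the corresponding values of $t$ found in Lemma~\ref{26_36}) by extracting further arithmetic constraints from the dual-lattice vectors. In Lemma~\ref{26_36} we already computed, for each admissible triple $(n,t)$, the numbers $t_i(v)=|\{x\in S(\Lambda)\mid (x,v)=i\}|$ counting minimal vectors at each inner product with a fixed dual vector $v\in\Lambda^*\setminus\Lambda$ minimal in its class. The first step is to solve that linear system explicitly at $(n,s)=(26,69888)$ for both $t=\tfrac{8}{3}$ and $t=4$, and at $(n,s)=(36,1149120)$ for $t=4$, and to check that the resulting $t_1,t_2,t_3$ are non-negative integers; any non-integral or negative value immediately eliminates that case. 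I expect the value $t=\tfrac{8}{3}$ to be killed this way, since it is not even an integer and $(v,v)\in\Q$ must interact with integrality of the $t_i$.

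For the cases that survive the integrality test, the second step is to exploit the even, integral structure more deeply. Since $\Lambda$ is integral and non-unimodular, for $v$ minimal in its class we have $|(x,v)|\le \tfrac{m}{2}=3$ for all $x\in S(\Lambda)$ by \cite[Lemme 1.1]{martinet}, which already forces $t_i=0$ for $i>3$ and is consistent with the system above. The idea is then to consider a second dual vector $w$ of a different class, or pairs $v,v'$ of dual vectors, and to count configurations $|\{x\in S(\Lambda)\mid (x,v)=i,\ (x,v')=j\}|$ using the $9$-design property applied to products $(x,v)^a(x,v')^b$ with $a+b\le 9$. Alternatively, one fixes a minimal vector $\lambda\in S(\Lambda)$ and counts how many $x\in X$ lie at each inner product with both $\lambda$ and $v$; the design equations of Theorem~\ref{char} applied to $\alpha=v+\mu\lambda$ for a scalar $\mu$ yield polynomial identities in $\mu$ whose coefficients give the mixed moments. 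Comparing these mixed moments against the integer constraints should produce a contradiction.

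The main obstacle, I expect, is the $n=36$, $t=4$ case together with any integral-looking solution at $n=26$: a single dual class gives only four scalar equations, which were already used to pin down $t$, so no new information comes from repeating that computation. The real work is to show that the putative values of $s$ and the $t_i$ are geometrically inconsistent. The cleanest route is probably to bound the norm of $\Lambda^*$ and count short dual vectors: one argues that the number of classes $v+\Lambda$ with $(v,v)=t$ minimal, multiplied by the structure of $\Lambda^*/\Lambda$ as a regular quadratic group, forces $|\Lambda^*/\Lambda|$ (the determinant) to take a value incompatible with the lattice having the prescribed minimum $m=6$ and kissing number $s$. Equivalently, one estimates $\gamma(\Lambda)=m/\det(\Lambda)^{1/n}$ and shows it would exceed the known upper bounds for extreme lattices in dimension $26$ or $36$, which contradicts the existence of such a lattice.

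Concretely, the plan is to combine three ingredients: the integrality of the $t_i$, the regularity of the quadratic form on $\Lambda^*/\Lambda$ (so that a vector $w$ with $(w,w)\notin 2\Z$ exists and can be taken minimal, as recalled in the excerpt), and a counting of the total mass $\sum_v t_0(v)$ over a full set of coset representatives, which must equal $s$ times the number of nontrivial cosets. If the arithmetic forces $\det(\Lambda)$ to be neither a perfect value compatible with an even lattice nor consistent with Table~\ref{9designs}, the lemma follows. I anticipate that the decisive contradiction will come from the evenness constraint: an even lattice of minimum $6$ has all norms in $2\Z$, so $t=(v,v)=\tfrac{8}{3}$ is impossible outright, and $t=4$ forces $v$ to be a ``deep hole'' type vector whose existence in the required multiplicity overdetermines the theta series, contradicting the positivity or integrality of its coefficients.
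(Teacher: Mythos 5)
Your proposal contains the right general instincts (use the values of $t=(v,v)$ from Lemma \ref{26_36}, exploit evenness, and aim for a Hermite-constant contradiction), but as written it has two genuine gaps and one outright error, so it does not close either case.

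First, the error: you claim that evenness of $\Lambda$ makes $t=(v,v)=\tfrac{8}{3}$ ``impossible outright.'' It does not. The vector $v$ lies in $\Lambda^*\setminus\Lambda$, and dual vectors of an even lattice need not have even norm, nor even integral norm (compare $A_2^*$, whose minimal norm is $\tfrac{2}{3}$). Far from being a contradiction, the occurrence of the denominator $3$ is the key input in dimension $26$: it shows that $\Lambda^*/\Lambda$ has $3$-torsion, and the paper then analyses $\Lambda^*/\Lambda$ as a regular quadratic $\F_3$-space via $q(x+\Lambda)=\tfrac{3(x,x)}{2}\bmod 3$. Since the minimal norms $\tfrac{8}{3}$ and $4$ give $q$-values $1$ and $0$ only, the space must be one-dimensional, forcing $\det(\Lambda)=3$. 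Only \emph{then} does your Hermite-constant idea work: $\gamma(\Lambda)=6/3^{1/26}$ exceeds the Cohn--Elkies bound for $\gamma_{26}$. Your sketch never pins down $\det(\Lambda)$; without that, the inequality $\gamma(\Lambda)>\gamma_{26}$ cannot be established, since a large determinant would make $\gamma(\Lambda)$ small.

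Second, for $n=36$ your plan (mixed moments for pairs of dual vectors, ``deep hole'' vectors overdetermining the theta series) is not an argument, and the extra moment computations you propose would not obviously yield anything new. The actual closing step is short and purely arithmetic: since every $v\in\Lambda^*\setminus\Lambda$ minimal in its class has $(v,v)=4$, and $(v+\lambda,v+\lambda)=(v,v)+2(v,\lambda)+(\lambda,\lambda)\in 2\Z$ for all $\lambda\in\Lambda$ (using $(v,\lambda)\in\Z$ and $\Lambda$ even), the whole of $\Lambda^*$ is even; an even integral lattice containing $\Lambda$ and contained in $\Lambda^{**}=\Lambda^*$ satisfies $\Lambda^*\subseteq(\Lambda^*)^*=\Lambda$, so $\Lambda$ is unimodular, contradicting the hypothesis. (Note also that the reduction ``w.l.o.g.\ $\Lambda$ is generated by its minimal vectors, hence even'' must be stated at the outset; your proposal uses evenness without justifying it.) Your first step, checking integrality of the $t_i$, is harmless but does not eliminate $t=\tfrac{8}{3}$ and is not needed in the paper's proof.
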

\begin{proof}
Let $\Lambda$ be a non-unimodular lattice.
Without loss of generality we can assume that $\Lambda$ is generated by its
minimal vectors, hence $\Lambda$ is even.
For $n=36$ we know that $(v,v)=4$ for all $v$ in 
$\Lambda^*\setminus \Lambda$ with minimal norm in its class modulo
$\Lambda$. Hence $\Lambda^*$ has to be even and therefore unimodular which 
contradicts our assumption.

For $n=26$ we know that $(v,v)\in\{\frac{8}{3},4\}$ 
for $v$ in $\Lambda^*\setminus \Lambda$ 
with minimal norm in its class modulo $\Lambda$.
$\Lambda^*/\Lambda$ is a regular quadratic $\F_3$ space with 
$q:\Lambda^*/\Lambda\rightarrow \F_3$ with 
$q(x+\Lambda):=\frac{3(x,x)}{2}\mod 3$. 
Because $q(\Lambda^*/\Lambda)=\{0,1\}$ we know that $\Lambda^*/\Lambda$
is an one-dimensional $\F_3$ space with a generator $v$ with $q(v)=1$. 
Hence $\det(\Lambda)=3$ and $\gamma(\Lambda)=\frac{6}{3^{1/26}}$ which 
is greater than the Hermite constant $\gamma_{26}$ (see \cite[Table 3]{cohelk}).
\end{proof}

\begin{lemma}\label{unimod}
If $\Lambda$ is unimodular and $\min(\Lambda)=6$ then $n=48$ and $\Lambda$
is even and extremal.
\end{lemma}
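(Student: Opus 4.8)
The plan is to split according to the parity of $\Lambda$ and to dispose of the even case at once. If $\Lambda$ is even unimodular then $n\equiv 0\pmod 8$, so among the dimensions of Table \ref{9designs} only $n=48$ survives. For an even unimodular lattice of dimension $n$ the extremal minimum is $2\lfloor n/24\rfloor+2$, which equals $6$ when $n=48$; since $\min(\Lambda)=6$ meets this bound, $\Lambda$ is by definition extremal. Thus the whole statement reduces to proving that $\Lambda$ cannot be odd, and I would spend the rest of the argument ruling out odd unimodular $\Lambda$ with $\min(\Lambda)=6$ in each admissible dimension.

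For the low-dimensional cases I would invoke the extremality bound for odd unimodular lattices, $\min(\Lambda)\le 2\lfloor n/24\rfloor+2$. For $n\in\{26,36,44,46\}$ this gives $\min(\Lambda)\le 4$, contradicting $\min(\Lambda)=6$. Hence only $n\in\{48,49\}$ remain among the odd candidates, and here the bound is compatible with minimum $6$, so a purely numerical argument no longer suffices.

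For the two surviving odd cases I would pass to harmonic theta series. Since $\Lambda$ is odd unimodular, for a harmonic polynomial $P$ of even degree $d$ the series $\theta_{\Lambda,P}(z)=\sum_{x\in\Lambda}P(x)\,q^{(x,x)}$ is a cusp form of weight $n/2+d$ for $\Gamma_0(4)$ (odd $d$ contribute $0$ because $\Lambda=-\Lambda$). As $\min(\Lambda)=6$ its $q$-expansion starts in degree $6$, and by Theorem \ref{char} the $9$-design property of $S(\Lambda)=X\disj -X$ is exactly the vanishing of the coefficient of $q^{6}$ for every harmonic $P$ of degree $d\in\{2,4,6,8\}$. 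The idea is to combine this vanishing with the modular transformation relating $\theta_\Lambda$ to its shadow: the design constraints, together with the prescribed value of $s$ from Table \ref{9designs}, determine the low-order coefficients of the shadow theta series, and one then checks by a finite computation in the relevant spaces of modular forms (in the spirit of the remark following Lemma \ref{9-des}) that the forced values cannot all be non-negative integers, so no such odd lattice exists. This last step, namely controlling the shadow for $n=48$ and the half-integral weight spaces for $n=49$, is the main obstacle; by contrast the even case and the dimension bound are routine.
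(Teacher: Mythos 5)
Your treatment of the even case is fine and coincides with the paper's: divisibility of $n$ by $8$ forces $n=48$ among the dimensions of Table \ref{9designs}, and minimum $6$ attains the extremal bound there. The problem is the odd case, which is exactly the substance of the lemma, and your proposal does not actually close it. Excluding $n\in\{26,36,44,46\}$ via the bound $\min\le 2\lfloor n/24\rfloor+2$ for \emph{odd} unimodular lattices is legitimate but already invokes a substantial theorem (the Rains--Sloane shadow bound) that the paper never needs; and for the two surviving cases $n=48$ (odd) and $n=49$ you only describe a programme -- harmonic theta series, shadow coefficients, a ``finite computation'' in spaces of (half-)integral weight modular forms -- and explicitly flag it as the main obstacle without carrying it out. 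As it stands, the proof is incomplete precisely where it matters.

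The paper disposes of the odd case with a short structural argument that you could substitute for your entire second half. Suppose $\Lambda$ is odd unimodular with $\min(\Lambda)=6$, and let $\Lambda^{(e)}=\{\lambda\in\Lambda\mid(\lambda,\lambda)\in 2\Z\}$ be its even sublattice, of index $2$ and hence determinant $4$, so non-unimodular. Since the minimum $6$ is even, every minimal vector of $\Lambda$ lies in $\Lambda^{(e)}$, so $S(\Lambda^{(e)})=S(\Lambda)$: thus $\Lambda^{(e)}$ is an integral \emph{non-unimodular} $9$-design lattice of minimum $6$. By Lemma \ref{26_36} its dimension would have to be $26$ or $36$, and Lemma \ref{not26_36} rules those out; contradiction. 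Hence $\Lambda=\Lambda^{(e)}$ is even, and the rest is your even-case argument. This reduction to the already-established non-unimodular lemmas is what makes the proof complete with no further modular-forms computation; if you want to keep your route, you must actually perform and verify the shadow computations for $n=48$ and $n=49$, which you have not done.
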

\begin{proof}
Let $\Lambda^{(e)}:=\{\lambda\in \Lambda|(\lambda,\lambda)\in 2\Z\}$ be the even
sublattice of $\Lambda$ then $S(\Lambda^{(e)})=S(\Lambda)$ and 
$\Lambda^{(e)}$ is even and unimodular as a result of Lemma \ref{26_36}
and Lemma \ref{not26_36}. Therefore $n$ has to be divisible by
$8$ as a result of a theorem by Hecke (see e.g. \cite[Satz V.2.5]{krieg}),
hence $n=48$. As $\Lambda^{(e)}$ is unimodular it has to be equal to 
$\Lambda$, so $\Lambda$ is even and obviously extremal.
\end{proof}
This concludes the proof of Theorem \ref{main} Part \ref{a}.

\begin{cor}
Both the Leech lattice and the $48$-dimensional even unimodular lattices 
yield not only $9$-designs but also $11$-designs.
\end{cor}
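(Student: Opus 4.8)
To prove the corollary I would use the connection between spherical designs and modular forms. Recall the harmonic-polynomial version of Venkov's criterion (the companion to Theorem \ref{char}, \cite{venkov}): an antipodal set $S(\Lambda)=X\disj -X$ on a sphere forms a spherical $11$-design if and only if $\sum_{x\in S(\Lambda)}P(x)=0$ for every harmonic homogeneous polynomial $P$ of degree $k$ with $1\le k\le 11$. For odd $k$ this is automatic because $S(\Lambda)=-S(\Lambda)$, and the even cases $k=2,4,6,8$ are already covered by the fact that both lattices are $9$-design lattices (Theorem \ref{main}, part \ref{a}, together with the classification of the Leech lattice). Hence the only thing left to establish is the single vanishing condition $\sum_{x\in S(\Lambda)}P(x)=0$ for harmonic $P$ of degree $10$.

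To obtain this I would pass to the harmonic theta series. For an even unimodular lattice $\Lambda\subseteq\R^n$ and a harmonic polynomial $P$ of degree $k>0$, the series $\theta_{\Lambda,P}(z):=\sum_{\lambda\in\Lambda}P(\lambda)\,e^{\pi i z(\lambda,\lambda)}$ is, by the classical theorem of Schoeneberg and Hecke, a cusp form of weight $n/2+k$ for $\SL_2(\Z)$, the cusp condition coming from $P$ having positive degree. Writing the expansion in $q=e^{2\pi i z}$, the coefficient of the lowest occurring power of $q$ is exactly the sum of $P$ over the minimal vectors. For the Leech lattice $n=24$, and $k=10$ yields weight $22$ with the expansion beginning in degree $m/2=2$; for the $48$-dimensional extremal even unimodular lattices $n=48$, and $k=10$ yields weight $34$, where extremality forces the expansion to begin in degree $m/2=3$.

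The conclusion then follows from the valence formula: a nonzero cusp form of weight $w$ for $\SL_2(\Z)$ vanishes at the cusp to order at most $w/12$, so a nonzero weight-$22$ cusp form cannot start beyond $q^1$ and a nonzero weight-$34$ cusp form cannot start beyond $q^2$. Since $\theta_{\Lambda,P}$ starts at $q^2$ (Leech) respectively $q^3$ ($48$-dimensional case), it must vanish identically, and in particular its leading coefficient $\sum_{x\in S(\Lambda)}P(x)$ is zero. This supplies the degree-$10$ vanishing and hence the $11$-design property. The one point requiring care — and effectively the heart of the argument — is that $\theta_{\Lambda,P}$ really is a cusp form of the stated weight; this rests on $\Lambda$ being even and unimodular, which for the $48$-dimensional lattices is supplied by Lemma \ref{unimod} and for the Leech lattice is classical. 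It is precisely the same extremality phenomenon that pushes the order of vanishing past the valence bound and makes the design jump from $9$ to $11$.
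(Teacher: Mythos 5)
Your argument is correct, but it takes a genuinely different (more self-contained) route: the paper disposes of this corollary in one sentence by observing that the lattices are even, unimodular, extremal and of dimension divisible by $24$, and then citing Venkov's theorem \cite[Chapter 7, Theorem 23]{c&s}, whereas you have in effect reproved that theorem via harmonic theta series and the valence formula. Your computation checks out: for the Leech lattice the weight is $22$ and a nonzero level-one cusp form of that weight vanishes at infinity to order at most $1<2$; for the $48$-dimensional lattices the weight is $34$ and the bound is $2<3$; in both cases $\theta_{\Lambda,P}$ must vanish identically, so the degree-$10$ harmonic moment of $S(\Lambda)$ is zero. One point to repair: your appeal to Theorem \ref{main}, part \ref{a}, to cover the degrees $k=2,4,6,8$ is mildly circular, since that statement is a classification (``the only integral $9$-design lattices with minimum $\le 7$ are \dots'') and the fact that the $48$-dimensional extremal lattices really \emph{are} $9$-design lattices is part of what this corollary is meant to establish. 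The fix costs nothing: your valence-formula argument applies verbatim to every degree $1\le k\le 11$, because the weight $n/2+k$ stays below $6m$ throughout that range (with $m=4$, $n=24$ for the Leech lattice and $m=6$, $n=48$ in the other case), so the order of vanishing $m/2$ always exceeds the valence bound and no input from the $9$-design classification is needed. The trade-off between the two proofs is clear: the paper's citation is shorter but black-boxes the key result, while your version is self-contained at the price of invoking the Hecke--Schoeneberg theorem on harmonic theta series and the harmonic-polynomial characterisation of spherical designs, neither of which the paper otherwise sets up.
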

\begin{proof}
These lattice are all even, unimodular and extremal and their dimension is 
divisible by $24$ hence their sets of minimal vectors form $11$-designs by a 
theorem by Venkov (see e.g. \cite[Chapter 7, Theorem 23]{c&s}).
\end{proof}

\section{$11$-design lattices with minimum $\le 9$}
Throughout this section $\Lambda\subseteq\R^n$ denotes an integral $11$-design
lattice with minimum $m\le 9$ and $s=|X|$ with $X\disj -X := S(\Lambda)$.
We will proceed in this section with the proof of theorem \ref{main}
part \ref{b} and compute the possible values for the dimension
and the kissing number in the same way as in Lemma \ref{9-des}.
\begin{lemma}
Using the definitions in the proof of Lemma \ref{9-des} 
we get that $s_i=0$ for $i>4$. The following system of linear 
equations has non-negative integral solutions for 
the $s_i$ and for $s:=|X|$ if $S(\Lambda)$ is a spherical $11$-design:
\begin{align*}
\begin{pmatrix} 1&2^2&3^2&4^2\\1&2^4&3^4&4^4\\1&2^6&3^6&4^6\\
	1&2^8&3^8&4^8\\1&2^{10}&3^{10}&4^{10}
\end{pmatrix}
\begin{pmatrix} s_1\\ s_2\\ s_3\\ s_4\end{pmatrix}=
\begin{pmatrix} \frac{sm^2}{n} -m^2\\ \frac{3sm^4}{n(n+2)}-m^4\\
	\frac{15sm^6}{n(n+2)(n+4)}-m^6\\ \frac{105sm^8}{n(n+2)(n+4)(n+6)}-m^8\\
	\frac{945sm^{10}}{n(n+2)(n+4)(n+6)(n+8)}-m^{10}
\end{pmatrix}.
\end{align*}
\end{lemma}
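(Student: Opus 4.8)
The plan is to mirror exactly the argument used for the $9$-design case in Lemma~\ref{9-des}, since the present statement is its direct analogue one degree higher. The essential input is the characterisation in Theorem~\ref{char}, now applied with $t=5$ (so that $2t+1=11$), which gives the five conditions $D_{2i}(\alpha)=c_i s\, m^i(\alpha,\alpha)^i$ for $i=1,\dots,5$. As in the proof of Lemma~\ref{9-des}, I would first record that for any fixed $\alpha\in S(\Lambda)$ the integers $s_i(\alpha):=|\{x\in X\mid (x,\alpha)=\pm i\}|$ are independent of $\alpha$: this follows because the power sums $\sum_i s_i\,(2i)^{2j}$ that they determine are pinned down, via Theorem~\ref{char}, by the single invariant $(\alpha,\alpha)=m$, so the $s_i$ depend only on $m$, $n$ and $s$ and not on the choice of $\alpha$.

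Next I would justify the truncation $s_i=0$ for $i>4$. Here the only new point compared with the $9$-design case is the integrality bound on inner products. For $x,\alpha\in S(\Lambda)$ the Cauchy--Schwarz inequality gives $(x,\alpha)^2\le (x,x)(\alpha,\alpha)=m^2$, so $|(x,\alpha)|\le m$; since $\Lambda$ is integral, $(x,\alpha)\in\Z$, and thus $|(x,\alpha)|\le \lfloor m\rfloor\le 9$ a priori. The sharper bound $|(x,\alpha)|\le 4$ is forced by $m\le 9$ together with the requirement that equality $|(x,\alpha)|=m$ can occur only when $x=\pm\alpha$; more precisely, for $i\ge 5$ one has $(2i)^2\ge 25>m\cdot m/\text{(something)}$, so one checks directly from $m\le 9$ that values $|(x,\alpha)|\ge 5$ are incompatible with $(x,\alpha)^2\le m^2=81$ unless $x=\pm\alpha$, in which case $(x,\alpha)=\pm m$ is already counted. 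This rules out $s_i$ for $i>4$ and leaves exactly the four unknowns $s_1,s_2,s_3,s_4$.

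With the truncation established, the system of equations is then simply the explicit evaluation of the five identities of Theorem~\ref{char} for $i=1,\dots,5$ at a minimal vector $\alpha$, after separating off the contribution of $x=\pm\alpha$ itself (which accounts for the $-m^{2i}$ correction term on the right-hand side, exactly as in Lemma~\ref{9-des}). Writing $\sum_{j=1}^{4} s_j\,(2j)^{2i}$ for the left-hand sum and substituting $c_i=\prod_{k=0}^{i-1}\frac{1+2k}{n-2k}$ and $(\alpha,\alpha)=m$ into the right-hand side yields the stated $5\times 4$ matrix equation. I expect no genuine obstacle here: the argument is a routine transcription of the previous lemma, the single substantive verification being the inner-product bound $|(x,\alpha)|\le 4$, which is where the hypothesis $m\le 9$ is actually used. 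The only subtlety worth flagging is that the system is now overdetermined (five equations in four unknowns), so that solvability over $\Q_{\ge 0}$ and integrality impose a genuine constraint on the admissible pairs $(n,s)$ — but that constraint is what is exploited in the subsequent enumeration rather than in the proof of this lemma itself.
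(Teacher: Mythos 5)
Your overall plan is the same as the paper's: the paper gives no separate argument for this lemma, simply evaluating the identities of Theorem~\ref{char} (with $t=5$) at $\alpha\in S(\Lambda)$ and splitting off the contribution of $x=\pm\alpha$, exactly as in Lemma~\ref{9-des}. That part of your write-up is fine, as is the observation that the $5\times 4$ Vandermonde-type system pins down the $s_i$ independently of $\alpha$.

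However, your justification of the truncation $s_i=0$ for $i>4$ contains a genuine error. Cauchy--Schwarz only gives $(x,\alpha)^2\le (x,x)(\alpha,\alpha)=m^2\le 81$, i.e.\ $|(x,\alpha)|\le 9$, and your assertion that ``values $|(x,\alpha)|\ge 5$ are incompatible with $(x,\alpha)^2\le 81$'' is simply false: $25\le 81$. The placeholder ``$m\cdot m/\text{(something)}$'' signals that the actual mechanism is missing. The correct argument uses minimality of the lattice, not Cauchy--Schwarz: for $x,\alpha\in S(\Lambda)$ with $x\ne\pm\alpha$, both $x-\alpha$ and $x+\alpha$ are nonzero vectors of $\Lambda$, so
\[
m\le (x\mp\alpha,\,x\mp\alpha)=2m\mp 2(x,\alpha),
\]
whence $|(x,\alpha)|\le m/2\le 9/2$, and by integrality of $\Lambda$ we get $|(x,\alpha)|\le 4$. (This is the same bound the paper invokes from \cite[Lemme 1.1]{martinet} in the dual-lattice setting.) With that substitution your proof is complete and coincides with the paper's intended argument.
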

\begin{rem}
We get no solutions $(n,s,s_i)_{i\le4}\in \Z_{>0}^6$ for $m=9$ and for 
$m=8$ we get such solutions only for the values of $n$ and $s$ in 
Tabel \ref{11designs}.
\begin{table}[h]
\begin{center}
\begin{tabular}{|c|c|c|c|c|c|}
\hline
n & 50       & 56        & 62         & 64         & 66       \\ 
\hline
s & 57256875 & 237875400 & 1071285600 & 1866110400 & 3236535225\\
\hline
n & 68        &72 	   & 76         & 78        & 82\\
\hline
s & 474335190 & 3109087800 & 1263241980 & 866338200 & 470377215\\
\hline 
\end{tabular}
\end{center}
\caption{Dimensions and Kissing numbers for 11-design lattices.}
\label{11designs}
\end{table}
\end{rem}
Now we can see with the same arguments as in Lemma \ref{26_36} and 
Lemma \ref{not26_36} that an integral 11-design lattice 
has to be unimodular.
\begin{lemma}\label{56}
There is no non-unimodular lattice with minimum $8$ whose minimal 
vectors form a spherical $11$-design.
\end{lemma}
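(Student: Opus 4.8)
The plan is to run the arguments of Lemma \ref{26_36} and Lemma \ref{not26_36} in parallel, now for an $11$-design of minimum $m=8$. For each $v\in\Lambda^*\setminus\Lambda$ minimal in its class modulo $\Lambda$, set $t_i(v):=|\{x\in S(\Lambda)\mid (x,v)=i\}|$ and $t:=(v,v)$. By \cite[Lemme 1.1]{martinet} we have $|(v,\lambda)|\le \min(\Lambda)/2=4$ for all $\lambda\in S(\Lambda)$, so $t_i=0$ for $i>4$, and, as before, the $t_i$ are independent of $v$. Evaluating Theorem \ref{char} at $\alpha=v$ for $i=1,\dots,5$ yields the overdetermined system
\begin{align*}
\begin{pmatrix} 1&2^2&3^2&4^2\\1&2^4&3^4&4^4\\1&2^6&3^6&4^6\\1&2^8&3^8&4^8\\1&2^{10}&3^{10}&4^{10} \end{pmatrix}
\begin{pmatrix} t_1\\ t_2\\ t_3\\ t_4 \end{pmatrix}=
\begin{pmatrix} \frac{smt}{n}\\ \frac{3sm^2t^2}{n(n+2)}\\ \frac{15sm^3t^3}{n(n+2)(n+4)}\\ \frac{105sm^4t^4}{n(n+2)(n+4)(n+6)}\\ \frac{945sm^5t^5}{n(n+2)(n+4)(n+6)(n+8)} \end{pmatrix}.
\end{align*}

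Since there are five equations in the four unknowns $t_1,\dots,t_4$, a non-negative solution exists only if the augmented matrix $[\,A\mid b\,]$ has the same rank as the coefficient matrix $A$. As $A$ has rank $4$, the compatibility condition is $\det[\,A\mid b\,]=0$; its right-hand column $b$ has entries of degrees $1,\dots,5$ in $t$, so the determinant is a polynomial $p_n(t)$ of degree $5$, and its positive rational roots are exactly the admissible values of $t$ for the given pair $(n,s)$ from Table \ref{11designs}. First I would compute these roots (e.g.\ with Pari) and discard every dimension for which $p_n$ has no positive rational root; the analogy with the $9$-design case suggests that only a few pairs $(n,t)$ survive.

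For each surviving pair I would argue as in Lemma \ref{not26_36}. Assuming without loss of generality that $\Lambda$ is generated by its minimal vectors, $\Lambda$ is even. If every admissible $t$ is an even integer, then $\Lambda^*$ is even and hence unimodular, contrary to hypothesis. Otherwise the denominator of $t$, together with the exponent of $\Lambda^*/\Lambda$, singles out the prime $p$ for which $\Lambda^*/\Lambda$ is a regular quadratic $\F_p$-space, and the values taken by $q(x+\Lambda)=\tfrac{p(x,x)}{2}\bmod p$ pin down its dimension. This fixes $\det(\Lambda)$ and hence the Hermite invariant $\gamma(\Lambda)=m\,\det(\Lambda)^{-1/n}$, which I would compare with the upper bound $\gamma_n$ in \cite[Table 3]{cohelk}; in each case the comparison should give $\gamma(\Lambda)>\gamma_n$, the desired contradiction.

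The main obstacle is not conceptual but the case-by-case bookkeeping across the ten dimensions of Table \ref{11designs}: one must confirm that $p_n$ has no relevant positive rational root in most dimensions and, for the few exceptions, reconstruct the discriminant form precisely enough that the Hermite comparison is sharp. The delicate cases are those with non-integral $t$, where---exactly as the value $t=\tfrac{8}{3}$ did in dimension $26$ in Lemma \ref{not26_36}---the prime $p$, the dimension of the $\F_p$-space $\Lambda^*/\Lambda$, and the values of its quadratic form must all be determined before $\det(\Lambda)$ and $\gamma(\Lambda)$ can be evaluated.
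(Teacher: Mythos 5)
Your proposal follows the paper's proof essentially verbatim: the same system of five equations in $t_1,\dots,t_4$, the same degree-$5$ compatibility polynomial in $t$ for each pair $(n,s)$ from Table \ref{11designs}, and the same endgame. The computation you defer confirms that the only surviving case is $n=56$ with $t=6$, which is handled by exactly the branch you describe (even $t$ forces $\Lambda^*$ even, hence $\Lambda$ unimodular); the fractional-$t$ discriminant-form analysis you prepare for is not needed here.
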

\begin{proof}
For all elements $v \in \Lambda^*\setminus\Lambda$ that are minimal 
in their class modulo $\Lambda$ we can define
$t_i(v):=|\{x\in S(\Lambda)|(x,v)=i\}|$. The $t_i$ are independent 
of $v$ and for $i>5$ $t_i=0$.
Therefore we get a system of equations again with $t:=(v,v)$:
\begin{align*}
\begin{pmatrix} 1&2^2&3^2&4^2\\1&2^4&3^4&4^4\\1&2^6&3^6&4^6\\
	1&2^8&3^8&4^8\\1&2^{10}&3^{10}&4^{10}
\end{pmatrix}
\begin{pmatrix} t_1\\ t_2\\ t_3 \\ t_4\\ \end{pmatrix}=
\begin{pmatrix} \frac{smt}{n}\\ \frac{3sm^2t^2}{n(n+2)}\\
	\frac{15sm^3t^3}{n(n+2)(n+4)}\\ \frac{105sm^4t^4}{n(n+2)(n+4)(n+6)}\\
	\frac{945sm^5t^5}{n(n+2)(n+4)(n+6)(n+8)}
\end{pmatrix}.
\end{align*}
$t$ has to be rational and positive. For every pair $(n,s)$ from 
Table \ref{11designs} we get a solution of the system and a polynomial equation of 
degree $5$ whose positive rational roots are the possible values for $t$.
The only dimension in which we get a positive rational value for $t$ is 
$n=56$ with $t=6$.
But then $\Lambda^*$ would have to be even and hence $\Lambda$ would 
be unimodular.
\end{proof}

\begin{lemma}
Let $\Lambda$ be unimodular with $\min(\Lambda)=8$ and $S(\Lambda)$
a spherical $11$-design, then $n=72$ and $\Lambda$ is even and extremal.
\end{lemma}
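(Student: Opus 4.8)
The plan is to mirror the argument of Lemma~\ref{unimod}: first reduce $\Lambda$ to an even unimodular lattice, then combine the admissible dimensions of Table~\ref{11designs} with the bound on the minimum of even unimodular lattices coming from the theory of modular forms. I would begin by passing to the even sublattice $\Lambda^{(e)}:=\{\lambda\in\Lambda|(\lambda,\lambda)\in2\Z\}$. Since $m=8$ is even, all minimal vectors lie in $\Lambda^{(e)}$, so $S(\Lambda^{(e)})=S(\Lambda)$; thus $\Lambda^{(e)}$ is an even lattice of minimum $8$ whose minimal vectors still form a spherical $11$-design. Were $\Lambda$ odd, then $\Lambda^{(e)}$ would have index $2$ and hence determinant $4$, i.e.\ be non-unimodular, contradicting Lemma~\ref{56}. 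Therefore $\Lambda^{(e)}$ is unimodular, which forces $\Lambda^{(e)}=\Lambda$, and $\Lambda$ is even and unimodular.

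Next I would use that even unimodular lattices exist only in dimensions divisible by $8$ (the theorem of Hecke, \cite[Satz V.2.5]{krieg}). Intersecting this with the dimensions permitted by Table~\ref{11designs} leaves only $n\in\{56,64,72\}$, so it remains to discard $56$ and $64$ and to pin down $72$.

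The decisive input — and the feature that distinguishes this case from the $9$-design situation, where divisibility by $8$ already singled out one dimension — is the extremal bound $\min(\Lambda)\le2\lfloor n/24\rfloor+2$ valid for every $n$-dimensional even unimodular lattice, with equality exactly when $\Lambda$ is extremal. For $n=56$ and $n=64$ this yields $\min(\Lambda)\le6$, incompatible with $m=8$, so both dimensions are excluded; for $n=72$ it yields $\min(\Lambda)\le8$, whence $m=8$ can occur only if $\Lambda$ meets the bound. This gives $n=72$ with $\Lambda$ even, unimodular and extremal. I expect the only real obstacle to be having the extremal bound available in exactly this form; the elimination of $56$ and $64$ is then a numerical comparison once $\Lambda$ is known to be even unimodular.
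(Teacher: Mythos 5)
Your proof is correct and follows essentially the same route as the paper: reduce to the even unimodular case via the even sublattice and Lemma~\ref{56}, invoke divisibility of $n$ by $8$, and use the extremal bound $\min(\Lambda)\le 2\lfloor n/24\rfloor+2$ to eliminate $n=56,64$ and force $n=72$ with $\Lambda$ extremal. The paper's version is merely terser, citing Lemma~\ref{unimod} for the evenness argument and stating the conclusion without explicitly listing the discarded dimensions.
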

\begin{proof}
$\Lambda$ is even (see Lemma \ref{unimod}).
As the theta-series of even unimodular lattices are 
modular forms, $n$ has to be divisible by
eight and $\min(\Lambda)\le 2\lfloor\frac{n}{24}\rfloor+2$ \cite[V.2.8.Satz]{krieg}. 
Therefore $n=72$ for $m=8$ is the only possible combination. 
\end{proof}

\section{$13$-design lattices of minimum $\le 11$}
We will now prove that there is no integral lattice with minimum
smaller or equal to 11 whose minimal vectors form a 
$13$-design.
For minima smaller than $10$ we can use the results for $11$-designs.
\begin{lemma}
There is no integral lattice $\Lambda$ with $\min(\Lambda)<10$ such that
$S(\Lambda)$ is a spherical $13$-design.
\end{lemma}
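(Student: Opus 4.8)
The plan is to reduce the statement to the classification already established and then to violate a single extra moment identity. Every spherical $13$-design is in particular a spherical $11$-design, and $\min(\Lambda)<10$ forces $m\le 9$. Hence any integral $13$-design lattice of minimum $<10$ must appear among the integral $11$-design lattices of minimum $\le 9$. By Theorem \ref{main} part \ref{b}, together with the $9$-design results of the preceding sections, this class consists of exactly three lattices, for which I record the triples $(n,s,m)$: the Leech lattice $\Lambda_{24}$ with $(24,98280,4)$, the $48$-dimensional extremal even unimodular lattices with $(48,26208000,6)$, and the $72$-dimensional extremal even unimodular lattices with $(72,3109087800,8)$. It therefore suffices to show that none of these three parameter sets can belong to a $13$-design.

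To do this I would invoke Theorem \ref{char} with $t=6$. The equations $D_{2i}(\alpha)=c_i|S(\Lambda)|m^i(\alpha,\alpha)^i$ for $i\le 5$ are precisely the $11$-design conditions and already hold; the only new requirement is the case $i=6$. Evaluating it at a minimal vector $\alpha$, so that $(\alpha,\alpha)=m$, passing from the symmetric set $S(\Lambda)$ to the half-set $X$ and splitting off the contribution of $\alpha$ itself, it becomes, in the notation of Lemma \ref{9-des},
\[
\sum_{x\in X,\,x\ne\alpha}(x,\alpha)^{12}
=\frac{10395\,s\,m^{12}}{n(n+2)(n+4)(n+6)(n+8)(n+10)}-m^{12}.
\]
The key point is that the left-hand side is a non-negative integer, since $\Lambda$ is integral and hence $(x,\alpha)\in\Z$ for all $x$.

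I would then finish by showing that the right-hand side is not an integer in any of the three cases. Its numerator involves only $10395=3^3\cdot5\cdot7\cdot11$, the integer $s$, and $m^{12}$ (a power of $2$, or of $2$ and $3$), whereas the denominator contains the factor $n+10$, equal to $34=2\cdot17$, $58=2\cdot29$ and $82=2\cdot41$ respectively. Each of the primes $17,29,41$ exceeds every prime dividing $10395$, divides neither $m$ nor $s$, and occurs only once among the six denominator factors; it therefore survives in lowest terms, the fraction is non-integral, and the degree-$12$ equation cannot hold. Consequently none of the three lattices is a $13$-design, which proves the lemma.

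The only genuine computation, and the step I expect to be the main (if routine) obstacle, is this coprimality check: one must confirm $17\nmid 98280$, $29\nmid 26208000$ and $41\nmid 3109087800$, after which non-integrality is automatic. A slightly longer alternative avoids it: for each lattice the admissible inner products of distinct minimal vectors satisfy $|(x,\alpha)|\le m/2$, so the distribution $(s_i)$ is pinned down uniquely by the degree $\le 10$ equations, and one then computes $\sum_{x\ne\alpha}(x,\alpha)^{12}$ outright and compares it with the required value. Either way, only the parameters $(n,s,m)$ enter, so the argument does not even use the existence or uniqueness of the three lattices.
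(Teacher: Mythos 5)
Your proof is correct, and while the first half coincides with the paper's, the second half takes a genuinely different route. The paper's own proof performs the same reduction (a $13$-design is an $11$-design, so by Theorem \ref{main}(\ref{b}) the lattice must be $\Lambda_{24}$ or an extremal even unimodular lattice in dimension $48$ or $72$), but then simply cites Martinet's Proposition 4.1, which asserts that these lattices carry no designs of strength above $11$. You replace that citation with a self-contained divisibility argument: the degree-$12$ moment equation evaluated at a minimal vector forces
\[
\sum_{x\in X,\;x\neq\pm\alpha}(x,\alpha)^{12}
=\frac{10395\,s\,m^{12}}{n(n+2)(n+4)(n+6)(n+8)(n+10)}-m^{12}\in\Z_{\ge 0},
\]
and the primes $17\mid 34$, $29\mid 58$, $41\mid 82$ appear exactly once in the denominator while dividing none of $10395=3^3\cdot5\cdot7\cdot11$, $m^{12}$, or $s$ (indeed $98280=2^3\cdot3^3\cdot5\cdot7\cdot13$, $26208000=2^8\cdot3^2\cdot5^3\cdot7\cdot13$, and $3109087800\equiv 31 \pmod{41}$), so the right-hand side is never an integer. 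The arithmetic checks out, and the derivation of the degree-$12$ equation is consistent with the conventions of Lemma \ref{9-des} (note only that Theorem \ref{char} as printed has $n-2k$ where the lemmas correctly use $n+2k$). What your version buys is independence from the external reference and the pleasant feature that only the parameters $(n,s,m)$ enter, not the actual lattices; what it costs is the small computational burden of the three coprimality checks, which you correctly flag as the only real work.
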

\begin{proof}
If $S(\Lambda)$ forms a $13$-design it also forms an $11$-design and hence
can only be an extremal even unimodular lattice of dimension $24$, $48$ or 
$72$. But as a result of \cite[Proposition 4.1]{martinet} we know that these 
lattices yield no higher designs.
\end{proof}

So the only statement left to prove is the following:
\begin{lemma}
There is no integral $13$-design lattice of minimum $10$ or $11$.
\end{lemma}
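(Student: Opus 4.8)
The plan is to follow exactly the same strategy that worked for minima $6$ and $8$ in the previous sections, specialized to $\min(\Lambda)=10$ and $\min(\Lambda)=11$. First I would set up the $13$-design analogue of Lemma \ref{9-des}: writing $s_i(\alpha):=|\{x\in X\mid(x,\alpha)=\pm i\}|$, Theorem \ref{char} gives that the $s_i$ are independent of $\alpha$ and vanish for $i$ larger than $\lfloor m/2\rfloor$, yielding a linear system
\begin{align*}
\sum_{i\ge 1} i^{2j}\,s_i &= c_{j}\,s\,m^{j}-m^{2j},\qquad j=1,\dots,6,
\end{align*}
in the unknowns $s_1,\dots,s_{\lfloor m/2\rfloor}$ and $s$, with $n$ entering through the constants $c_j$. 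For $m=10$ there are five unknowns $s_1,\dots,s_5$ and six equations, and for $m=11$ there are five unknowns and again six equations, so in each case the system is overdetermined. A finite search (the same kind of Pari computation invoked after Lemmas \ref{9-des} and the $11$-design lemma) then produces the finite list of pairs $(n,s)$ admitting non-negative integral solutions.

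Once the admissible $(n,s)$ are in hand, I would dispose of the non-unimodular candidates first, mirroring Lemmas \ref{26_36} and \ref{not26_36}. For a minimal coset representative $v\in\Lambda^*\setminus\Lambda$ one sets $t_i(v):=|\{x\in S(\Lambda)\mid(x,v)=i\}|$, which are again independent of $v$ and vanish beyond $i>\lfloor m/2\rfloor+1$; Theorem \ref{char} then gives, with $t:=(v,v)$,
\begin{align*}
\sum_{i\ge 1} i^{2j}\,t_i &= c_{j}\,s\,m^{j}\,t^{j},\qquad j=1,\dots,6.
\end{align*}
Solving for the $t_i$ and eliminating produces, for each admissible $n$, a polynomial equation in $t$ whose positive rational roots are the only candidate values of $(v,v)$. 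For each surviving root I would argue as before: either $t$ forces $\Lambda^*$ to be even (so $\Lambda$ is unimodular, contradicting non-unimodularity), or the resulting determinant yields a Hermite invariant $\gamma(\Lambda)$ exceeding the known bound $\gamma_n$, ruling the lattice out.

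The remaining case is a unimodular $\Lambda$ with $S(\Lambda)$ a $13$-design. As in Lemma \ref{unimod} I would pass to the even sublattice, conclude $\Lambda$ is even, so $n$ is divisible by $8$ and the extremality bound $\min(\Lambda)\le 2\lfloor n/24\rfloor+2$ from \cite[V.2.8.Satz]{krieg} applies. For $m=10$ this requires $\lfloor n/24\rfloor\ge 4$, i.e. $n\ge 96$, and for $m=11$ even more, so I expect every $(n,s)$ surviving the design equations to lie below this threshold and hence be eliminated. The main obstacle I anticipate is the same one that made the earlier lemmas delicate: there is no a priori reason the overdetermined design system forces $n$ small, so if some admissible dimension does satisfy both the design equations and $n\ge 96$, the clean contradiction breaks down and one would need an additional invariant — most likely a Hermite-constant comparison via \cite{cohelk}, or a modular-forms obstruction showing no extremal even unimodular lattice with the required $(n,s)$ actually supports a $13$-design — to finish. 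Assuming the Pari search leaves only dimensions with $n<96$ (or cases excluded by these secondary arguments), the two minima $m=10$ and $m=11$ are each ruled out and the lemma follows.
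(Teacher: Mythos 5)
Your first step is exactly the paper's entire proof: one sets up the six design equations in the five unknowns $s_1,\dots,s_5$ (together with $s$ and $n$) for $m\in\{10,11\}$, and the computation already shows this overdetermined system has \emph{no} non-negative integral solutions at all, so no candidate pairs $(n,s)$ survive. The non-unimodular elimination, the Hermite-constant comparisons, and the modular-forms bound you plan for afterwards are therefore never needed.
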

\begin{proof}
If we assume that $\Lambda$ would be an integral $13$-design 
lattice with $\min(\Lambda)\in\{10,11\}$
then the following system of equations would have integral non-negative 
solutions for $s$ and $s_1,\dots ,s_5$.
\begin{align*}
\begin{pmatrix} 1&2^2&3^2&4^2&5^2\\1&2^4&3^4&4^4&5^4\\
	1&2^6&3^6&4^6&5^6\\1&2^8&3^8&4^8&5^8\\
	1&2^{10}&3^{10}&4^{10}&5^{10}\\
	1&2^{12}&3^{12}&4^{12}&5^{12}\\
\end{pmatrix}
\begin{pmatrix} s_1\\ s_2\\ s_3\\ s_4 \\ s_5\end{pmatrix}=
\begin{pmatrix} \frac{sm^2}{n} -m^2\\ \frac{3sm^4}{n(n+2)}-m^4\\
	\frac{15sm^6}{n(n+2)(n+4)}-m^6\\ \frac{105sm^8}{n(n+2)(n+4)(n+6)}-m^8\\
	\frac{945sm^{10}}{n(n+2)(n+4)(n+6)(n+8)}-m^{10}\\
	\frac{10395sm^{12}}{n(n+2)(n+4)(n+6)(n+8)(n+10)}-m^{12}
\end{pmatrix}.
\end{align*}
But an easy calculation shows that there are no such solutions.
\end{proof}

\end{document}